\numberwithin{equation}{section}
\newtheorem*{theorem*}{Theorem}
\newtheorem*{corollary*}{\bf Corollary}
\newtheorem*{remark*}{\bf Remark}
\newtheorem{theorem}{Theorem}[section]
\newtheorem{corollary}[theorem]{Corollary}
\newtheorem{definition}[theorem]{Definition}
\newtheorem{example}[theorem]{Example}
\newtheorem{lemma}[theorem]{Lemma}
\newtheorem{proposition}[theorem]{Proposition}
\newtheorem{remark}[theorem]{Remark}
\title[ On Fano and weak Fano of  BSDH varieties]
{On Fano and weak Fano\\  Bott-Samelson-Demazure-Hansen varieties}
 \author{B. Narasimha Chary}
\address{%
B. Narasimha Chary\\
Institut Fourier, UMR 5582 du CNRS\\
Universit{\'e} de Grenoble Alpes\\
 CS 40700, 38058\\
Grenoble cedex 09, France.\\
Email: narasimha-chary.bonala@univ-grenoble-aples.fr
}
\subjclass[2010]{14F17, 14M15}   
\begin{document}
\maketitle
\begin{abstract} 
Let $G$ be a simple algebraic group over the field of complex numbers.
Fix a maximal torus $T$ and a Borel subgroup $B$ of $G$ containing $T$.
Let $w$ be an element of the Weyl group $W$ of $G$, and let $Z(\tilde w)$ be the Bott-Samelson-Demazure-Hansen (BSDH) variety 
corresponding to a reduced expression $\tilde w$ of $w$ with respect to 
the data $(G, B, T)$.

In this article we give complete characterization of the expressions $\tilde w$ such that the
corresponding 
 BSDH variety $Z(\tilde w)$ is Fano or weak Fano. 
As a consequence we prove vanishing theorems of 
the cohomology of tangent bundle of certain BSDH varieties 
and hence we get some local rigidity results.
\end{abstract}
\let\thefootnote\relax\footnotetext{The author is supported by AGIR Pole MSTIC project run by the University of Grenoble Alpes, France.}

{\bf Keywords:} Bott-Samelson-Demazure-Hansen varieties, 
  Fano and weak Fano varieties, and Mori cone.
  

\section{Introduction}\label{intro}

Let $G$ be a simple algebraic group over  the field $\mathbb{C}$ of 
complex numbers. Let $T$ be a maximal torus in $G$, and $B$ be a Borel subgroup of $G$ containing $T$.
Let $N_G(T)$ be the normalizer of $T$ in $G$ and let $W:=N_G(T)/T$ be the Weyl group of $G$.  
 For $w\in W$, let $X(w)$ be the corresponding Schubert variety in the flag variety $G/B$.   
Let $\tilde w=s_{\beta_1}\cdots s_{\beta_r}$ be a reduced expression of $w$,
where $\beta_k$ for $1\leq k\leq r$  are simple roots.
Let $Z(\tilde w)$ be the BSDH variety corresponding to the expression $\tilde w$, which is a natural desingularization of the Schubert variety $X(\tilde w)$.
In \cite{anderson2014effective} and \cite{Charytoric}, the authors described the divisors $D$ in $Z(\tilde w)$ such that the pair $(Z(\tilde w), D)$ is {\it log Fano} and in \cite{anderson2014schubert} the case of Schubert varieties were considered.
A smooth projective variety $X$ is called {\it \bf Fano} (respectively,  {\it \bf weak Fano}) if its 
 anti-canonical divisor $- K_X$ is 
 ample (respectively,  nef and big).
In \cite{Charytoric},
we obtained some expressions $\tilde w$ for which the BSDH variety $Z(\tilde w)$ is 
Fano or weak Fano by using the degeneration of a BSDH variety as a Bott tower which is a toric variety.
 
 In this paper, we characterize all the expressions $\tilde w$ of $w$ such that 
the BSDH variety $Z(\tilde w)$ is Fano or weak Fano. In \cite{Chary1}, we proved that when $G$ is simply laced, the higher cohomology groups of the tangent bundle of BSDH varieties 
  vanish.
 In \cite{Chary11} and \cite{Charytoric}, we obtained 
  some vanishing results of the cohomology of tangent bundle of BSDH varieties for non simply laced groups and we observed that these vanishing results depend on the given reduced expression $\tilde w$.
The case of non-reduced expressions were considered in \cite{Charynonreduced}.
  Here we prove some vanishing results for certain BSDH varieties.

  To describe the results we need some notations. Let $\tilde w=s_{\beta_1} \cdots s_{\beta_i}\cdots s_{\beta_j} \cdots s_{\beta_r}$ be a reduced expression of $w$
and let $\beta_{ij}:=\langle \beta_j, \check \beta_i \rangle$ for $j>i$, where $\check \beta_i$ is the co-root of $\beta_i$ (see Section \ref{preleminaries} for more details).
For $1\leq i \leq r$, define  
 $$\eta^+_i:=\{r\geq j> i: \beta_{ij}>0\} ~~\mbox{ and}~~ \eta^-_i:=\{r\geq j> i: \beta_{ij}<0\}.$$
  Let $s(i)$ be the least integer $j>i$ such that $\beta_{ij}>0$. 
  Define $$\eta_{i, s(i)}^-:=\{s(i)>j>i: \beta_{ij}<0\}.$$
 We write the integers $\beta_{ij}$ for $j>i$ in the matrix form:
 $$
  \begin{bmatrix}
        0 & \beta_{12} & \beta_{13} & \dots & \beta_{1r}\\
        0 & 0 &\beta_{23} & \dots & \beta_{2r}\\
        0 & 0 & 0 &\dots & \beta_{3r}\\
        \vdots & \vdots & &\ddots & \vdots \\
        0 & \dots & \dots & & 0
       \end{bmatrix}_{r\times r}
   $$
Then the cardinality $|\eta_i^+|$ (respectively,  $|\eta_i^-|$) of $\eta_i^+$ (respectively,  of $\eta_i^-$) gives the number of positive (respectively,  negative) entries in $i^{th}$
row of the matrix. If there exists a positive entry in $i^{th}$ row, 
then $|\eta_{i, s(i)}^-|$ gives the number of negative entries in $i^{th}$ row between $(i+1)^{th}$ and $(s(i)-1)^{th}$ column. 
Note that in the matrix, since $\beta_k$'s are simple roots, the positive entries are $2$ and the negative entries are $-1, -2$ or $-3$. 

Let $1\leq i \leq r$. If $|\eta_{i}^-|=1$ (respectively,  $|\eta_{i}^-|=2$ ),
 then set $\eta^-_i=\{l\}$ (respectively,   $\eta^-_i=\{l_1, l_2\}$).
 If $|\eta_{i, s(i)}^-|=1$ (respectively,  $2$), then 
  $\eta_{i, s(i)}^-:=\{m \}$ (respectively,  $\eta_{i, s(i)}^-:=\{m_1, m_2 \}$). 

  \noindent 
 \begin{itemize}
   \item $N^{I}_i$  is the condition that 
 
 Case 1: $|\eta^+_i|=0$, 
 $|\eta^-_i|\leq 1$, and if $|\eta^-_i|=1$ 
 then $\beta_{il}=-1$; or
 
  Case 2: $|\eta_i^+|=1$, $|\eta^-_{i, s(i)}|=1$ and $\beta_{im}=-1$.
  \item $N^{II}_i$  is the condition that 
  
  Case 1: Assume that $|\eta^+_i|=0$. 
  Then $|\eta_i^-|\leq 2$, and 
  if $|\eta^-_i|=1 (\mbox{respectively, }~ 2)$ then  $\beta_{il}=-1$ or $-2$ 
  (respectively,  $\beta_{il_1}=-1=\beta_{il_2}$).
  
  Case 2 : If $|\eta_i^+|\geq   1$, then 
  $|\eta_{i, s(i)}^-|\leq 2$, and if $|\eta_{i, s(i)}^-|=1$(respectively,  $2$), 
    then $\beta_{im}=-1 ~\mbox{or} -2$ (respectively,  $\beta_{im_1}=-1=\beta_{im_2}$). 
  \end{itemize}

  \begin{example} Consider the following matrices:
  
  (1) $ \begin{bmatrix}
        0 & -1& 0 & 0 & 0\\
        0 & 0 & 2 & -1 & 2\\
        0 & 0 & 0 & 0& 0\\
        0 & 0 & 0 & 0& -1\\
        0 & 0 & 0 & 0& 0
              \end{bmatrix},
 $  (2) $ \begin{bmatrix}
        0 & -1& 0 & -1 & 0\\
        0 & 0 & 0 & -2 &0\\
        0 & 0 & 0 & 0& -1\\
        0 & 0 & 0 & 0& -2\\
        0 & 0 & 0 & 0& 0
              \end{bmatrix},
 $  (3) $\begin{bmatrix}
        0 & 2& -1 & 2 & -1 & 2\\
        0 & 0 & 2 & -1 & 2 & 0\\
        0 & 0 & 0 & 0& -2 & 0 \\
        0 & 0 & 0 & 0& -1& -1\\
        0 & 0 & 0 & 0& 0 & -1\\
        0 & 0 & 0 & 0& 0 & 0
              \end{bmatrix}  
 $, \\ (4)
 $\begin{bmatrix}
        0 & 2& -1 & -1 & 2\\
        0 & 0 & 2 & -2 &2\\
        0 & 0 & 0 & -1& 0\\
        0 & 0 & 0 & 0& -1\\
        0 & 0 & 0 & 0& 0
              \end{bmatrix}$
           and (5)
 $\begin{bmatrix}
        0 & -1 & -1 & -1 &  0\\
        0 & 0 & -1 & 0 &-1\\
        0 & 0 & 0 & 0& -3\\
        0 & 0 & 0 & -2& 0\\
        0 & 0 & 0 & 0& 0
              \end{bmatrix}  
 $. 
 
 Then,  in the matrix (1), the condition $N_i^I$ holds for all $1\leq i \leq 5$. For the matrices (2), (3) and (4), the condition $N_i^{II}$ holds for all $1\leq i\leq 5$.
For the matrix (5),  $N_2^{II}$ and $N_4^{II}$ holds, but $N_1^{II}$, $N_2^{I}$, $N_3^{II}$ and  $N_4^{I}$ does not hold. 

\end{example}

   \begin{definition}   
 We say that $Z(\tilde w)$ satisfies 
 \underline{condition $I$} (respectively, \underline{condition $II$}) if
 $N^{I}_i$ (respectively, $N_i^{II}$) holds for all $1\leq i\leq r$.
   \end{definition}
 Note that for all $1\leq i\leq r$, 
 $N_i^I\Longrightarrow N_i^{II} $. Hence if $Z(\tilde w)$ satisfies condition $I$, then it also satisfies the condition $II$.
 Also note that in the condition $N_i^I$,  if $|\eta^+_i|=0$ and 
 $|\eta^-_i|=0$ for all $1\leq i \leq r$, then the corresponding matrix is zero matrix and $Z(\tilde w)\simeq \mathbb P^1\times\cdots \times  \mathbb P^1$ ($r$-times). 
    
    We prove the following:
 \begin{theorem*}[see Theorem \ref{bsdhfano}]\
 
 \begin{enumerate}
   \item 
   $Z(\tilde w)$ is Fano if and only if it satisfies  $I$.
\item   $Z(\tilde w)$ is weak  Fano if and only if it satisfies  $II$.
  \end{enumerate}
  \end{theorem*}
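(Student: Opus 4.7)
The plan is to use the iterated $\mathbb P^1$-bundle structure of $Z(\tilde w)$, identify generators of the Mori cone $\overline{NE}(Z(\tilde w))$, and test positivity (respectively nonnegativity) of the anticanonical divisor $-K_{Z(\tilde w)}$ against them. Writing $\tilde w_k:=s_{\beta_1}\cdots s_{\beta_k}$, we have a tower $Z(\tilde w)=Z(\tilde w_r)\xrightarrow{\pi_r}Z(\tilde w_{r-1})\to\cdots\to\mathrm{pt}$ in which each map is a $\mathbb P^1$-bundle. The boundary divisors $Z_1,\ldots,Z_r$ (with $Z_i$ the divisor obtained by replacing the $i$-th factor by a point) give a basis of $\mathrm{Pic}(Z(\tilde w))$. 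The first step is to derive, by iterating the relative canonical formula together with the identity expressing $\pi_k^{*}\mathcal L_{\beta_k}$ in terms of the $Z_i$ with coefficients involving the $\beta_{ij}$, an explicit expansion
\[
-K_{Z(\tilde w)}\;=\;\sum_{i=1}^{r} a_i\,[Z_i],
\]
in which $a_i$ is a closed expression in the entries of the $i$-th row of the matrix $(\beta_{ij})$.

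The second step is to exhibit generators of the Mori cone. For each $i$ there is a natural curve class $C_i$ in $Z(\tilde w)$ obtained from a fiber of the $i$-th $\mathbb P^1$-bundle stage (lifted to $Z(\tilde w)$ through chosen sections of the later stages); its intersection numbers $[Z_j]\cdot C_i$ are read off directly from the bundle data and involve exactly the entries $\beta_{ij}$. Together with a small auxiliary family of curves — arising from sections of the later $\mathbb P^1$-bundles, relevant when row $i$ carries a positive entry, since a positive $\beta_{i,s(i)}$ forces an additional extremal class — these generate $\overline{NE}(Z(\tilde w))$, as can be established by induction on $r$ using the $\mathbb P^1$-bundle structure at the last stage. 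Combining this with the anticanonical formula gives closed expressions for $-K_{Z(\tilde w)}\cdot C$, for each $C$ in the generating set, in terms of the row-$i$ data.

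The third step is the combinatorial matching. For each $i$, the sign condition on $-K\cdot C$ over all candidate curves splits according to whether row $i$ has a positive entry. If no positive entry is present, the relevant sum runs over all of $\eta_i^-$; translating $>0$ (respectively $\ge 0$) gives the bounds on $|\eta_i^-|$ and the allowed values of $\beta_{il}$ described in Case $1$ of $N_i^I$ (respectively $N_i^{II}$). When a positive entry is present, only the negative entries strictly before the first positive entry — i.e.\ the set $\eta_{i,s(i)}^-$ — contribute to the tightest inequality, giving Case $2$. For the weak-Fano direction we must additionally check bigness of $-K$; this follows once nefness is known, because the explicit anticanonical formula computes $(-K_{Z(\tilde w)})^r>0$ directly via the iterated intersection numbers (the zero-matrix case reduces to the product of $\mathbb P^1$'s noted after the definition, and removing or keeping any single nontrivial $Z_i$ only increases the self-intersection).

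The main technical obstacle is identifying the generators of $\overline{NE}(Z(\tilde w))$ precisely, so that nonnegativity of $-K$ on a finite set of curves implies nefness on the full cone; the contribution of the auxiliary curves when a row contains a positive entry is what distinguishes conditions $N_i^I$ and $N_i^{II}$ from a naive ``only $C_i$ matters'' heuristic. Once this cone description is in place, the remainder of the argument is an inductive $\mathbb P^1$-bundle calculation of intersection numbers combined with a finite enumeration of the possible sign patterns in each row (recalling that positive entries are $2$ and negative entries lie in $\{-1,-2,-3\}$), which matches exactly the statements of $N_i^I$ and $N_i^{II}$.
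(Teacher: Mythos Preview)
Your overall strategy---pair against the generators of the Mori cone and check the sign of $-K_{Z(\tilde w)}\cdot C$---is exactly what the paper does. Two points deserve sharpening.

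\textbf{Mori cone generators.} The paper does not rederive these but quotes Perrin's result: the extremal rays are the classes $[\tilde C_i]:=[C_i]-[C_{s(i)}]$, with the convention $[C_{s(i)}]=0$ when row $i$ has no positive entry. Your ``small auxiliary family of curves'' is precisely this; using the explicit formula lets you compute $-K\cdot[\tilde C_i]$ cleanly. Working in the basis $D_j=\mathcal L(\beta_j)$ (so $-K=\sum_j D_j$) together with $[C_i]\cdot[D_j]=\beta_{ij}$ for $j\ge i$ gives $-K\cdot[C_i]=2+\sum_{j>i}\beta_{ij}$, and then, since $\beta_{s(i)}=\beta_i$ forces $\beta_{ij}=\beta_{s(i)j}$ for $j>s(i)$, the difference telescopes to $-K\cdot[\tilde C_i]=2+\sum_{i<j<s(i)}\beta_{ij}$. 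This is the whole intersection computation; your plan to instead expand $-K$ in the $Z_i$-basis and rebuild the cone description by induction on $r$ is possible but strictly more work.

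\textbf{Bigness.} Here your proposal has a genuine gap. The claim that ``removing or keeping any single nontrivial $Z_i$ only increases the self-intersection'' is not a valid argument for $(-K)^r>0$; top self-intersections of effective divisors do not behave monotonically in this way, and you have not given a computation. The paper sidesteps the issue entirely: since $\tilde w$ is reduced, $Z(\tilde w)$ has an open $B$-orbit, which is affine, and the support of $-K_{Z(\tilde w)}$ (equivalently of $\partial Z(\tilde w)+\mathcal L(\delta)$) is exactly the complement of this orbit. An effective divisor whose complement is affine is automatically big (sections grow like $m^{\dim}$ because one can embed a polynomial ring into $\bigoplus_m H^0(mD)$). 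This gives bigness for free, independent of nefness, and you should use it instead of attempting the self-intersection computation.

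With these two adjustments your argument becomes identical to the paper's; the remaining case analysis (Case~1: no $s(i)$; Case~2: $s(i)$ exists) and the enumeration of how $-2\le\sum\beta_{ij}\le 0$ can be realised with entries in $\{-1,-2,-3\}$ is exactly as you outline.
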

 
   Recall that a {\it \bf Coxeter type element} is an element of the Weyl group 
having a reduced expression of the form $s_{\beta_1}s_{\beta_2} \cdots s_{\beta_r}$, $r\leq rank(G)$
such that $\beta_{j}\neq \beta_{k}$ whenever $j\neq k$ (see \cite[Section 4.4, page 56]{Humconjugacy}).
Note that for a Coxeter type element $w$, it is known that the Schubert variety $X(w)$ is a toric variety and hence $Z(\tilde w)$ is also a toric variety (see \cite{karuppuchamy2013schubert}).
By using a vanishing result for the cohomology of tangent bundle on Fano toric varieties and using the above theorem we get the following result.   Let $T_{Z(\tilde w)}$ be the tangent bundle of $Z(\tilde w)$. Then, 
 \begin{corollary*}[see Corollary \ref{vanishing} and Corollary \ref{rigid}]
   If $\tilde w$ is Coxeter type element and $Z(\tilde w)$ satisfies $I$, then $H^j(Z(\tilde w), T_{Z(\tilde w)})=0$ for all $j>0$. In particular, $Z(\tilde w)$ is locally rigid.
 \end{corollary*}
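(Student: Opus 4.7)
The plan is to combine Theorem~\ref{bsdhfano} with a known vanishing theorem for the tangent bundle of smooth projective Fano toric varieties. First I would record that when $\tilde w$ is of Coxeter type, the simple reflections $s_{\beta_1},\ldots,s_{\beta_r}$ are pairwise distinct; the excerpt (via~\cite{karuppuchamy2013schubert}) then tells us that the Schubert variety $X(w)$, and consequently its natural desingularization $Z(\tilde w)$, carries the structure of a smooth projective toric variety.

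Next, since by hypothesis $Z(\tilde w)$ satisfies condition $I$, I would invoke part~(1) of Theorem~\ref{bsdhfano} to conclude that $Z(\tilde w)$ is Fano. At this point $Z(\tilde w)$ is a smooth projective Fano toric variety, and the vanishing theorem alluded to in the introduction (available for instance as a special case of the Bien--Brion theorem on regular compactifications, or via the Danilov-type combinatorial description of $T_X$ for toric varieties together with the Fano hypothesis) yields $H^j(Z(\tilde w), T_{Z(\tilde w)}) = 0$ for all $j > 0$. This gives the first assertion.

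For the second assertion I would appeal to Kodaira--Spencer deformation theory: a smooth projective variety $X$ with $H^1(X, T_X) = 0$ has no nontrivial first-order infinitesimal deformations, and is therefore locally rigid. Specializing the vanishing above to $j = 1$ gives the local rigidity of $Z(\tilde w)$. The real work is not in this corollary but in the Fano characterization Theorem~\ref{bsdhfano} that it invokes; once that is in hand, the only point of mild care is to ensure that the cited toric vanishing theorem is available in the full range $j > 0$ rather than only for $j = 1$, so as to license the stronger cohomological statement before specializing to the local rigidity consequence.
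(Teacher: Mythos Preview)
Your proposal is correct and follows essentially the same approach as the paper: establish that $Z(\tilde w)$ is toric for Coxeter type $\tilde w$ via \cite{karuppuchamy2013schubert}, apply Theorem~\ref{bsdhfano}(1) to get Fano, invoke the Bien--Brion vanishing \cite[Proposition~4.2]{Bien1996} for the tangent bundle, and then deduce local rigidity from $H^1(Z(\tilde w),T_{Z(\tilde w)})=0$ via deformation theory. The paper's proofs of Corollaries~\ref{vanishing} and~\ref{rigid} proceed in exactly this way, with the same citations.
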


 The organization of the article: In Section 2, we set up the notations and recall the construction of BSDH varieties.
 Section 3, contains the results on Mori cone of the BSDH variety. Section 4, contains some results on canonical line bundle of BSDH variety.
In Section 5 and 6, we prove the main theorem and some applications. 
  
 \section{Preliminaries}\label{preleminaries}
 Let $G$ be a simple algebraic group over the field of complex numbers.
 Fix a maximal torus $T$ and $B$ a Borel subgroup containing $T$.
 We denote by $R$ the set of all roots, by $R^+$ (respectively,  $R^-$) the set of positive (respectively,  negative) roots, by $S$ the set of simple roots
 corresponding to the data $(G, B, T)$ and by $W$ the associated Weyl group.
Let $S=\{\alpha_1, \ldots, \alpha_n\}$, where $n$ is the rank of $G$.
The simple reflection corresponding to the simple root $\alpha$ is denoted by $s_{\alpha}$.
For $w\in W$, let $\tilde w=s_{\beta_1}\cdots s_{\beta_r}$ be a reduced expression of $w$ in simple reflections.
For $\alpha\in S$, we denote by $P_{\alpha}$ the minimal parabolic subgroup of $G$ 
generated by $B$ and a representative of $s_{\alpha}$.

We recall that the Bott-Samelson-Demazure-Hansen (BSDH) variety corresponding to a reduced expression $\tilde w=s_{\beta_1}\cdots s_{\beta_r}$ is defined by 
$$Z(\tilde w)=P_{\beta_1}\times \cdots \times P_{\beta_r}/B^r,$$
where the action of $B^r$ on $P_{\beta_1}\times \cdots \times P_{\beta_r}$ is given by 
$$(p_1, \ldots, p_r)\cdot (b_1, \ldots, b_r)=(p_1\cdot b_1, b_1^{-1}\cdot p_2\cdot b_2, \ldots, b_{r-1}^{-1}\cdot p_r \cdot b_r),$$ 
$p_j\in P_{\beta_j}, b_j \in B ~\mbox{ for all}~1\leq j \leq r$ (see \cite[p.73, Definition 1]{brion} and \cite{demazure1974}).

The variety $Z(\tilde w)$ is smooth projective and it is a desingularization of the Schubert variety $X(w)(:=\overline{BwB/B} \subset G/B)$ (see \cite{demazure1974}).
We denote the natural birational surjective morphism from 
$Z(\tilde w)$ to $X(w)$ and the composition map $Z(\tilde w)\to X(w)\hookrightarrow G/B$ by $\phi_w$. 
Let $f_r:Z(\tilde w)\longrightarrow Z(\tilde w')$ denote the map induced by the projection
$$P_{\beta_1}\times \cdots \times P_{\beta_r} \longrightarrow P_{\beta_1}\times \cdots \times P_{\beta_{r-1}}, $$
where $\tilde w'=s_{\beta_1}\cdots s_{\beta_{r-1}}$. Note that $f_{r}$ is a $\mathbb P^1$-fibration and we have the following cartesian diagram (see \cite[Page 66]{brion}):
\begin{equation}\label{cartesian}
 \xymatrix{
Z(\tilde w)=Z(\tilde w')\times _{G/P_{\beta_r}}G/B \ar[d]^{f_r} \ar[rrr]^{\phi_w} &&& G/B\ar[d]^{f}\\
Z(\tilde w') \ar[rr]^{\phi_{w'}} &&& G/P_{\beta_r}
}
\end{equation}
where $f:G/B \longrightarrow G/P_{\beta_r}$ is given by $gB\mapsto gP_{\beta_r}$.
Let $\sigma_r: Z(\tilde w')\longrightarrow Z(\tilde w)$ denote the map induced by the inclusion
$P_{\beta_1}\times \cdots \times P_{\beta_{r-1}} \longrightarrow P_{\beta_1}\times \cdots \times P_{\beta_{r}}$ which takes $(p_1, \ldots ,p_{r-1})$ into $(p_1,\ldots, p_{r-1}, 1)$.
Then $\sigma_r$ is a closed immersion.
Let $1\leq i \leq r$. We denote by $Z_i$, the divisor in $Z(\tilde w)$ given by 
$$Z_i:=\{[(p_1, \ldots, p_r)]\in Z(\tilde w): p_i\in B\}.$$

Let $\mathfrak{g}$ (respectively,  $\mathfrak{h}$) denote the Lie algebra of $G$ (respectively,  $T$). Let $X(T)$ denote the group of all characters of $T$.
We have $X(T)\otimes \mathbb R=Hom(\mathfrak{h}_{\mathbb R}, \mathbb R)$, the dual of the real form $\mathfrak{h}_{\mathbb R}$ of $\mathfrak{h}$.
We denote $\langle ~, ~\rangle$ the positive definite $W$-invariant form on $Hom(\mathfrak{h}_{\mathbb R}, \mathbb R)$ induced by the Killing form of $\mathfrak{g}$.
For more details we refer to \cite{Hum1} and also see \cite{springer2010linear}. We write $\beta_{ij}=\langle \beta_j , \check \beta_i \rangle=\langle \beta_j    , 2\beta_i\rangle/\langle \beta_i, \beta_i \rangle $ for simple roots $\beta_i$ and $\beta_j$.

\section{Mori cone of a BSDH variety}
First we recall Mori cone of a smooth projective variety.
Let $X$ be a smooth projective variety, let $Z_1(X):=\{1\mbox{-cycles of}~ X\}$, where $1$-cycle of $X$ is a formal sum $\sum_{i}a_iC_i$
with irreducible curves $C_i$ and $a_i\in \mathbb Z$. 
Consider the $\mathbb R$-vector spaces 
$$N_1(X):=(Z_1(X)/\equiv )\otimes \mathbb R ~\mbox{and}~ N^1(X):=(CDiv(X)/\equiv)\otimes \mathbb R,$$
where $\equiv$ denotes numerical equivalence and $CDiv(X)$ is the group of (Cartier) divisors in $X$. 
It is known that $N_1(X)$ and $N^1(X)$ are dual vector spaces via natural paring and are finite dimensional (see \cite[Chapter IV]{kleiman1966toward}).
Define $NE(X)$ in $N_1(X)$ by 
$$NE(X):=\{\sum_{finite}a_iC_i: a_i\in \mathbb R_{\geq 0} ~\mbox{and}~ C_i ~\mbox{is an irreducible curve in $X$}~\},$$
 the real cone in $N_1(X)$ generated by classes of irreducible curves.
The Mori cone $\overline{NE}(X)$ of $X$ is the closure of $NE(X)$ in $N_1(X)$.
Let $Nef(X)$ (respectively,  $Amp(X)$) be the cone generated by classes of numerically effective (nef) (respectively,  ample) divisors in $N^1(X)$.
Then $Nef(X)$ (respectively,  $Amp(X)$) is called {\it nef cone} (respectively,  {\it ample cone}) of $X$. 
By Kleiman's criterion, the closure of $Amp(X)$ in $N^1(X)$ is $Nef(X)$ (see \cite[Section 1, Chapter IV]{kleiman1966toward} and also \cite[Theorem 1.4.23]{lazarsfeld2004positivity}).
Also the nef cone $Nef(X)$ is dual to the Mori cone $\overline{NE}(X)$
(see \cite[Section 2, Chapter IV]{kleiman1966toward} and \cite[Page 61]{lazarsfeld2004positivity}). 

We recall some results on Mori cone of BSDH varieties from 
 \cite{perrin2005rational} (see also \cite{perrin2007}).
  Denote $Z_i=Z(s_{\beta_1}\cdots \widehat{s_{\beta_i}}\cdots s_{\beta_r})$, where $\widehat{a}$ means $a$ is removed (see Section 2.1).
 Note that 
  $$Z_{i}=f_r^{-1}\cdots f_{i+1}^{-1}(Im(\sigma_i))$$ and it is a divisor in $Z(\tilde w)$. For any $K\subset [1, r]:=\{1,\ldots, r\}$,
 denote  
$$Z_{K}=\bigcap_{i\in K}Z_{i}$$
 Note that the codimension of $Z_{K}$ is $|K|.$ The classes of $Z_{K}$ form a basis
of $N_*(Z(\tilde w))$ and M. Demazure completely described the
 structure of this group (see \cite[\S 4, Proposition 1]{demazure1974}).
 In \cite{lauritzen2002line} and in \cite{anderson2014effective}, the authors studied effective divisors in $Z(\tilde w)$.

 For $1\leq i \leq r$, define the curve $C_i=Z_{[1, r]\setminus  i}$.
 These curves $C_i$ for $1\leq i \leq r$ form a basis of $N_{1}(Z(\tilde w))$ and also we have
   $$[C_i]=\prod_{j\neq i}[Z_{j}].$$ 
 Define 
$$[\tilde{C_i}]:=[C_i]-[C_{s(i)}],$$ 
where $s(i)$ is the least integer $j>i$ 
such that $\beta_{ij}>0$. Set $[C_{s(i)}]:=0$, if there is no such $s(i)$.
Then we have the following result from \cite[Proposition 4.2]{perrin2005rational} (see also \cite[Corollary 2.15]{perrin2007}):

\begin{proposition}\label{perrin}\

 \begin{enumerate}
  \item The classes $[\tilde{C_i}]$ generate $N_{1}(Z(\tilde w))$.
  \item $\overline{NE}(Z(\tilde w))=\sum_{i=1}^{r}\mathbb R_{\geq 0}[\tilde{C_i}]$ and $[\tilde{C_i}]'s$ are extremal rays.
 \end{enumerate}

\end{proposition}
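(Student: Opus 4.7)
The plan is to derive (1) by a triangular change of basis, and to derive (2) by combining an inductive construction of effective curves with a dual pairing against nef line bundles.

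For (1), by definition $[\tilde C_i] = [C_i] - [C_{s(i)}]$ with $s(i) > i$ whenever defined (and $[C_{s(i)}] := 0$ otherwise). Writing the tuple $\{[\tilde C_i]\}_{i=1}^r$ in the given basis $\{[C_i]\}_{i=1}^r$ of $N_1(Z(\tilde w))$ yields an upper unitriangular matrix: the diagonal entries are all $1$, and the only nonzero off-diagonal entries are a single $-1$ in row $i$, column $s(i)$, strictly above the diagonal. Such a matrix is invertible, so $\{[\tilde C_i]\}$ is also a basis of $N_1(Z(\tilde w))$.

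For (2), I would establish both inclusions separately. To show $\sum \mathbb R_{\geq 0}[\tilde C_i] \subseteq \overline{NE}(Z(\tilde w))$, I would induct on $r$ using the $\mathbb P^1$-fibration $f_r \colon Z(\tilde w) \to Z(\tilde w')$ from the cartesian diagram \eqref{cartesian}. The class $[C_r]$ is the class of a fiber of $f_r$, hence effective, and equals $[\tilde C_r]$ when no $s(r)$ exists. For $i < r$ with $s(i) < r$ (or $s(i)$ undefined), $[\tilde C_i] = (\sigma_r)_*[\tilde C_i']$ is the pushforward via the closed embedding $\sigma_r$ of an effective class on $Z(\tilde w')$ guaranteed by the inductive hypothesis. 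The delicate case is $s(i) = r$, which forces the twist $\beta_{ir}$ of the projective bundle to be positive; here one must exhibit an effective curve of class $[C_i] - [C_r]$, produced as a "second section" of $f_r$ associated to a splitting of the underlying rank-two bundle as $\mathcal O \oplus \mathcal L_r$ on $Z(\tilde w')$.

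To show the reverse inclusion $\overline{NE}(Z(\tilde w)) \subseteq \sum \mathbb R_{\geq 0}[\tilde C_i]$, I would construct a dual basis of nef line bundle classes $L_1, \ldots, L_r$ satisfying $L_j \cdot [\tilde C_i] = \delta_{ij}$. Natural candidates arise as suitable integer combinations of the relative $\mathcal O(1)$ classes of the tower of $\mathbb P^1$-fibrations $Z(\tilde w) \to Z(s_{\beta_1} \cdots s_{\beta_{r-1}}) \to \cdots \to Z(s_{\beta_1})$, or equivalently as pullbacks of ample line bundles via natural morphisms from $Z(\tilde w)$ to smaller BSDH varieties and to partial flag varieties $G/P$. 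Nefness of the $L_j$ is then checked via base-point-freeness. Granting such a dual basis, any $\alpha \in \overline{NE}(Z(\tilde w))$ admits the expansion $\alpha = \sum_i (L_i \cdot \alpha)[\tilde C_i]$ with all coefficients $L_i \cdot \alpha \geq 0$ (nef paired with pseudo-effective), completing the inclusion. Extremality of each $[\tilde C_i]$ is then automatic: the Mori cone coincides with the positive span of a basis, so it is simplicial and its $r$ generators are precisely its extremal rays. The main technical obstacle I anticipate is the simultaneous construction of the dual basis $\{L_j\}$ together with the verification of $L_j \cdot [\tilde C_i] = \delta_{ij}$; this reduces to careful bookkeeping of intersection numbers propagated up the tower of $\mathbb P^1$-fibrations, translating the signs and magnitudes of the entries $\beta_{ij}$ into Chern classes of the relative $\mathcal O(1)$ bundles via the projection formula.
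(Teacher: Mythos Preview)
The paper does not prove this proposition; it is quoted as a result from Perrin \cite[Proposition 4.2]{perrin2005rational} (see also \cite[Corollary 2.15]{perrin2007}), with no argument supplied in the present paper. There is therefore no in-paper proof to compare your proposal against.

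For what it is worth, your sketch is sound and tracks the standard route to such a statement. Part (1) is immediate from the unitriangular change of basis, exactly as you say. For part (2), the two-sided strategy you describe---proving each $[\tilde C_i]$ is an effective class by induction up the tower of $\mathbb P^1$-fibrations, and then bounding $\overline{NE}$ from above via a dual basis of nef line bundles $L_j$ with $L_j\cdot[\tilde C_i]=\delta_{ij}$---is precisely the shape of Perrin's original argument. The ``second section'' you anticipate in the case $s(i)=r$ exists because, for simple roots, $\beta_{ir}>0$ forces $\beta_i=\beta_r$; the rank-two bundle underlying $f_r$ then restricts to $C_i'\simeq\mathbb P^1$ as $\mathcal O\oplus\mathcal O(2)$, and its negative section pushes forward to a curve of class $[C_i]-[C_r]$. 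The dual nef classes are the divisors $[Z_j]$: Lauritzen--Thomsen \cite{lauritzen2002line} show $\mathcal O_{Z(\tilde w)}(Z_j)$ is globally generated, and the intersection numbers $[Z_j]\cdot[C_i]$ computed in \cite{perrin2005rational} yield $[Z_j]\cdot[\tilde C_i]=\delta_{ij}$, which is exactly the bookkeeping you flag as the remaining technical step.
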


\section{The canonical line bundle of $Z(\tilde w)$}

It is well known that the canonical line bundle $\mathcal O_{Z(\tilde w)}(K_{Z(\tilde w)})$ 
is given by 
$$\mathcal O_{Z(\tilde w)}(K_{Z(\tilde w)})=\mathcal O_{Z(\tilde w)}(-\partial Z(\tilde w))\otimes \mathcal L(-\delta),$$
where $\partial Z(\tilde w)$ is the boundary divisor and  
$\delta$ is half sum of the positive roots (see \cite[Proposition 2]{mehta1985frobenius}).  
By Cartesian diagram \ref{cartesian} (page 4), we can see that the relative tangent bundle of $f_r:Z(\tilde w)\to Z(\tilde w')$ is the homogeneous line bundle 
 $\mathcal L(\beta_r)$ corresponding to the root $\beta_r$. We denote $D_r$, the corresponding divisor in $Z(\tilde w)$.
 By abuse of notation $D_j$ also denote the pullback to $Z(\tilde w)$  for all $1\leq j \leq r-1$. 
Now we have,
\begin{lemma}\label{canonical}
 The canonical line bundle $\mathcal O_{Z(\tilde w)}(K_{Z(\tilde w)})$ of $Z(\tilde w)$ is given by 
  $$\mathcal O_{Z(\tilde w)}(K_{Z(\tilde w)})= \mathcal O_{Z(\tilde w)}(-\sum_{j=1}^{r} D_j).$$
\end{lemma}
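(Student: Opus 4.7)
The plan is to proceed by induction on the length $r$ of the reduced expression $\tilde w$, exploiting the $\mathbb{P}^1$-fibration $f_r\colon Z(\tilde w)\to Z(\tilde w')$ from the cartesian diagram (\ref{cartesian}). For the base case $r=1$, one has $Z(s_{\beta_1})=P_{\beta_1}/B\simeq \mathbb{P}^1$, whose tangent bundle is $\mathcal{L}(\beta_1)\simeq \mathcal{O}_{\mathbb{P}^1}(2)$; hence $D_1$ has degree $2$ and $K_{Z(\tilde w)}=-D_1$, as claimed.

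For the inductive step, apply the standard relative canonical bundle formula to the smooth morphism $f_r$:
$$\omega_{Z(\tilde w)}\;\cong\; f_r^*\omega_{Z(\tilde w')}\otimes \omega_{Z(\tilde w)/Z(\tilde w')}.$$
Since $f_r$ is a $\mathbb{P}^1$-fibration whose relative tangent bundle is $\mathcal{L}(\beta_r)=\mathcal{O}_{Z(\tilde w)}(D_r)$, the relative dualizing sheaf is $\omega_{Z(\tilde w)/Z(\tilde w')}=\mathcal{O}_{Z(\tilde w)}(-D_r)$. The induction hypothesis applied to $\tilde w'$ (which has length $r-1$) yields
$$\omega_{Z(\tilde w')}\;\cong\; \mathcal{O}_{Z(\tilde w')}\Bigl(-\sum_{j=1}^{r-1}D_j^{(w')}\Bigr),$$
where $D_j^{(w')}$ is the divisor on $Z(\tilde w')$ analogous to $D_j$. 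Pulling back by $f_r$ and invoking the abuse of notation $f_r^* D_j^{(w')}=D_j$ for $j<r$ (as stated just before the lemma), we conclude
$$\omega_{Z(\tilde w)}\;\cong\; \mathcal{O}_{Z(\tilde w)}\Bigl(-\sum_{j=1}^{r-1}D_j - D_r\Bigr)\;=\;\mathcal{O}_{Z(\tilde w)}\Bigl(-\sum_{j=1}^r D_j\Bigr).$$

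No serious obstacle arises. The two inputs that deserve attention are: (i) the identification of the relative tangent bundle of $f_r$ with $\mathcal{L}(\beta_r)$, which follows from (\ref{cartesian}) because the universal $\mathbb{P}^1$-bundle $G/B\to G/P_{\beta_r}$ has relative tangent $\mathcal{L}(\beta_r)$ (the fiber $P_{\beta_r}/B\simeq\mathbb{P}^1$ has tangent space of weight $\beta_r$ at $eB$), and $f_r$ is the pullback of this bundle along $\phi_{w'}$; and (ii) the consistency of the pullback convention for the $D_j$ with $j<r$, which is automatic given the stated notation. As a sanity check, combining our formula with the Mehta--van der Kallen identity $\omega_{Z(\tilde w)}=\mathcal{O}(-\partial Z(\tilde w))\otimes \mathcal{L}(-\delta)$ would give the linear equivalence $\sum_{j=1}^r D_j \sim \sum_{i=1}^r Z_i + \mathcal{L}(\delta)$, but this identity is not needed for the argument above.
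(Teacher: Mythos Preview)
Your proof is correct and follows essentially the same approach as the paper: induction on $r$ via the relative canonical bundle formula for the $\mathbb{P}^1$-fibration $f_r$, together with the identification of the relative tangent bundle as $\mathcal{L}(\beta_r)$. Your write-up is in fact more detailed than the paper's (you spell out the base case and justify the identification of the relative tangent bundle via the cartesian diagram), though note the formula you mention at the end is due to Mehta--Ramanathan, not Mehta--van der Kallen.
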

\begin{proof}
 Proof is by induction on $r$. If $r=1$, then we are done.
 Assume that $r>1$ and the result is true for $r-1$.
 Since $f_r:Z(\tilde w)\to Z(\tilde w')$ is a $\mathbb P^1$-fibration, the relative canonical bundle $K_{f_r}$ is given by 
  $$K_{f_r}=\mathcal O_{Z(\tilde w)}(K_{Z(\tilde w)})\otimes f_{r}^*( 
 \mathcal O_{Z(\tilde w')}(\check K_{Z(\tilde w')}))$$
 (see \cite[Corollary 24, page 56]{kleiman1980relative}).
 Then, by induction we see that  
 $$\mathcal O_{Z(\tilde w)}(K_{Z(\tilde w)})=K_{f_r}\otimes f_r^*(\mathcal O_{Z(\tilde w')}(-\sum_{j=1}^{r-1} D_j)).$$
 Since $K_{f_r}=\mathcal O_{Z(\tilde w)}(-T_{r})$, we have  
 \[ K_{Z(\tilde w)}= \mathcal O_{Z(\tilde w)}(-\sum_{j=1}^{r} D_j).\qedhere\]
\end{proof}

We have the following.
\begin{lemma}\label{5.10} For all $1\leq i\leq r$, we have  
$$-K_{Z(\tilde w)}\cdot [C_i]=2+\sum_{j>i}\beta_{ij}.$$
 \end{lemma}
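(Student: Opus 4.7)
The plan is to combine Lemma \ref{canonical}, which gives $-K_{Z(\tilde w)} = \sum_{j=1}^r D_j$, with a case-by-case computation of the intersection numbers $D_j \cdot [C_i]$. Thus I would first write
$$-K_{Z(\tilde w)} \cdot [C_i] = \sum_{j=1}^r D_j \cdot [C_i]$$
and reduce the statement to three claims: $D_j \cdot [C_i] = 0$ if $j < i$, $D_i \cdot [C_i] = 2$, and $D_j \cdot [C_i] = \beta_{ij}$ if $j > i$. Summing over $j$ gives the desired $2 + \sum_{j>i} \beta_{ij}$.

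For $j < i$, I would use that $D_j$ is pulled back to $Z(\tilde w)$ from a divisor on $Z(\tilde w_j) := Z(s_{\beta_1} \cdots s_{\beta_j})$ along the composition of $\mathbb{P}^1$-fibrations $f_{j+1} \circ \cdots \circ f_r$. A point $[(p_1, \ldots, p_r)] \in C_i = Z_{[1,r] \setminus i}$ admits the normalized representative $(1, \ldots, 1, p_i, 1, \ldots, 1)$, so its first $j$ factors are trivial; consequently $C_i$ is contracted to a point in $Z(\tilde w_j)$, and the projection formula yields $D_j \cdot [C_i] = 0$. For $j = i$, the same normalization shows that $C_i$ maps isomorphically onto the fiber of $f_i$ over $[(1,\ldots,1)]$, which is a $\mathbb{P}^1$; since $\mathcal O(D_i)$ is by definition the relative tangent bundle of $f_i$ and restricts to $T_{\mathbb{P}^1} = \mathcal{O}(2)$ on this fiber, $D_i \cdot [C_i] = 2$.

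The substantive step is $j > i$. Here I would factor the projection $Z(\tilde w) \to Z(\tilde w_j)$ through $Z(\tilde w_i)$ and use the iterated sections $\sigma_j \circ \cdots \circ \sigma_{i+1} : Z(\tilde w_i) \hookrightarrow Z(\tilde w_j)$ appending $1$'s, together with the compatibility $\phi_{w_i} = \phi_{w_j} \circ (\sigma_j \circ \cdots \circ \sigma_{i+1})$ with the natural maps to $G/B$. Tracing the $B^r$-equivariant description of $\mathcal{O}(D_j) = \mathcal{L}(\beta_j)$ through these sections identifies its restriction to $Z(\tilde w_i)$ with the pullback $\phi_{w_i}^* \mathcal{L}(\beta_j)$ of the Borel--Weil line bundle on $G/B$. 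Under $\phi_{w_i}$ the curve $C_i$ maps isomorphically onto the Schubert curve $P_{\beta_i} B/B \cong \mathbb{P}^1$, on which $\mathcal{L}(\beta_j)$ has degree $\langle \beta_j, \check \beta_i \rangle = \beta_{ij}$ by Chevalley's formula. Hence $D_j \cdot [C_i] = \beta_{ij}$.

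The main obstacle is the case $j > i$: keeping track of how $\mathcal{L}(\beta_j)$ transforms under the iterated sections $\sigma_k$ in the $B^r$-quotient presentation requires careful bookkeeping of characters, even though the final degree calculation on the Schubert curve $P_{\beta_i} B/B$ is classical. The cases $j \leq i$ are essentially formal consequences of the $\mathbb{P}^1$-bundle structure and the projection formula.
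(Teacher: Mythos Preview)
Your approach is correct and matches the paper's: both write $-K_{Z(\tilde w)}=\sum_j D_j$ via Lemma~\ref{canonical} and then use the intersection formula $D_j\cdot[C_i]=0$ for $j<i$ and $D_j\cdot[C_i]=\beta_{ij}$ for $j\geq i$ (with $\beta_{ii}=2$). The only difference is that the paper simply cites \cite[Proposition~3.11]{perrin2005rational} for this intersection formula, whereas you sketch a direct proof of it via the projection formula, the fiberwise description of $D_i$, and the identification $\mathcal{O}(D_j)\simeq\phi_{w_j}^*\mathcal{L}(\beta_j)$ together with the degree of $\mathcal{L}(\beta_j)$ on the Schubert curve $P_{\beta_i}B/B$.
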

\begin{proof}
 By \cite[Proposition 3.11]{perrin2005rational}, we have the following:
 $$[C_i]\cdot [D_j]=\begin{cases}
                           0 & ~~\mbox{if}~~ i>j\\
                             \beta_{ij} & ~\mbox{if}~i\leq j
                          \end{cases}
$$
 Then by Lemma \ref{canonical}, we see that 
 $$-K_{Z(\tilde w)}\cdot [C_i]=\sum_{j\geq i}\beta_{ij}.$$
 Since $\beta_{ii}=\langle \beta_i , \check \beta_i\rangle=2$, we get \[-K_{Z(\tilde w)}\cdot [C_i]=2+\sum_{j>i}\beta_{ij}. \qedhere \]
\end{proof}

\section{Fano or weak Fano BSDH varieties}
 In this section we study the Fano and weak Fano properties for BSDH varieties. 
 We refer to \cite{lazarsfeld2004positivity} for more details on ample, big and nef divisors. 
 First we prove the following lemma
 (see \cite[Section 6]{Charymori} and also see \cite[Chapter II]{hartshorne2006ample}):
    \begin{lemma}\label{big1}
 Let $X$ be a smooth projective variety and let $U$ be an open affine subset of $X$. Let 
 $D$ be an effective divisor with support $X\setminus U$. Then $D$ is {\it big}.
   \end{lemma}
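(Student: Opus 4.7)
The plan is to verify bigness via the Iitaka-dimension criterion: an effective divisor $D$ on a smooth projective variety $X$ of dimension $n$ is big if and only if $\kappa(X, D) = n$, equivalently, the subfield $K(X, D) \subset \mathbb{C}(X)$ spanned by ratios $s/t$ of sections $s, t \in H^0(X, \mathcal{O}_X(mD))$ (for varying $m$) has transcendence degree $n$ over $\mathbb{C}$. This characterization is standard and is the version of bigness I will aim for; it is equivalent to the $h^0(X, mD) \gtrsim m^n$ growth formulation used in Lazarsfeld.

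The key observation I would use is that every regular function on the affine open $U$ belongs to $K(X, D)$. Let $s_D \in H^0(X, \mathcal{O}_X(D))$ denote the canonical section whose zero divisor is $D$, and let $f \in \mathcal{O}(U)$. Viewed as a rational function on $X$, the polar divisor of $f$ is supported on $X \setminus U = \mathrm{Supp}(D)$. Hence for $m$ sufficiently large (larger than the pole orders of $f$ along each irreducible component of $D$), the product $f \cdot s_D^m$ extends to a global section of $\mathcal{O}_X(mD)$. Writing $f = (f s_D^m)/s_D^m$ then exhibits $f$ as an element of $K(X, D)$, producing an inclusion $\mathcal{O}(U) \hookrightarrow K(X, D)$.

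Since $U$ is affine of dimension $n$, the coordinate ring $\mathcal{O}(U)$ contains $n$ algebraically independent elements over $\mathbb{C}$. By the previous step these all lie in $K(X, D)$, whence $\mathrm{tr.deg.}_{\mathbb{C}} K(X, D) \geq n$. The reverse bound $\kappa(X, D) \leq \dim X$ is automatic, so $\kappa(X, D) = n$ and $D$ is big, as desired.

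The only non-formal point is the extension claim used in the second paragraph: that a rational function whose polar divisor is supported on $\mathrm{Supp}(D)$ becomes a regular section of $\mathcal{O}_X(mD)$ once $m$ exceeds the orders of the poles. This reduces to smoothness of $X$ in codimension one, which guarantees that the local rings at the generic points of the components of $D$ are discrete valuation rings, so the pole order can be matched against $m$ component-by-component. This is the only step where the geometric hypothesis on $X$ enters, and I do not anticipate any real obstacle beyond this routine valuation-theoretic check.
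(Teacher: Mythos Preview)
Your argument is correct and follows essentially the same route as the paper: both use that regular functions on the affine open $U$ have poles only along $\mathrm{Supp}(D)=X\setminus U$, hence furnish $n$ algebraically independent sections of some $\mathcal{O}_X(mD)$, forcing bigness. The only cosmetic difference is that the paper first packages the pole divisors of the $f_i$ into an auxiliary effective divisor $E$ supported on $X\setminus U$, shows $E$ is big via the growth of monomials, and then deduces bigness of $D$ from $mD=E+F$, whereas you work with $D$ directly via the Iitaka-dimension formulation.
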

\begin{proof}
It suffices to show that there exists an effective divisor $E$ with support $X\setminus U$ such that $E$ is {\it big}.
Indeed, we then have $mD=E+F$ for some $m\geq 0$ and for some effective divisor $F$.
Then $E+F$ is big and hence so is $D$.

Assume that 
$dim(X)=n$. Then there exist algebraically independent elements $f_1,\ldots, f_n$ in $\mathcal O_X(U)$ over $\mathbb C$. 
 View $f_1,\ldots, f_n$ as rational functions on $X$, then $f_1,\ldots, f_n\in H^0(X, \mathcal O_X(E))$
for some effective divisor $E$ with support $X\setminus U$ (since $div(f_i)$ is an effective divisor with support in $X\setminus U$ for $1\leq i \leq r$).
Thus, the monomials in $f_1,\ldots, f_n$ of any degree $m$ are linearly independent elements of $H^0(X, \mathcal O_X(mE))$. 
So $dim(H^0(X, \mathcal O_X(mE)))$ grows like $m^n$ as $m\to \infty$.
Hence $E$ is {\it big} (see \cite[Corollary 2.1.38 and Lemma 2.2.3]{lazarsfeld2004positivity}) and this completes the proof.
\end{proof}

We get the following as a variant of Lemma \ref{big1}.

    \begin{lemma}\label{big}
Let $X$ be a smooth projective variety and $D$ be an effective divisor. Let $supp(D)$ denotes the support of $D$.
If $X\setminus supp(D)$ is affine, then $D$ is {\it big}. 
\end{lemma}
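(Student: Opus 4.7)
The plan is to reduce this to Lemma \ref{big1} by taking $U = X \setminus \mathrm{supp}(D)$. By hypothesis this $U$ is an open affine subset of $X$, and by construction the effective divisor $D$ has support exactly $X \setminus U$. Thus the hypotheses of Lemma \ref{big1} are satisfied verbatim, and its conclusion gives that $D$ is big.

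In more detail, I would first note that $\mathrm{supp}(D)$ is a closed subset of $X$ (as the underlying set of a Weil/Cartier divisor), so its complement $U := X \setminus \mathrm{supp}(D)$ is open; the assumption that $U$ is affine is precisely what we are given. Then I would apply Lemma \ref{big1} to the pair $(U, D)$, which produces an effective divisor $E$ with $\mathrm{supp}(E) \subseteq X \setminus U = \mathrm{supp}(D)$ whose global sections grow at maximal rate $m^n$, so that $E$ is big. Since $\mathrm{supp}(E) \subseteq \mathrm{supp}(D)$, one can choose an integer $m \gg 0$ such that $mD - E$ is effective, and therefore $mD$ is the sum of a big divisor and an effective divisor, hence big; thus $D$ itself is big.

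There is essentially no obstacle here: the statement is just a repackaging of Lemma \ref{big1} in terms of the support of $D$ rather than in terms of a separately chosen affine open $U$. The only minor point one needs to be careful about is that Lemma \ref{big1} produces the auxiliary big divisor $E$ supported in $X \setminus U$, not $D$ itself, so one must still invoke the standard fact that \emph{big $+$ effective $=$ big} to conclude bigness for the given $D$; this is immediate from the definition of bigness in terms of the growth of $h^0(X, \mathcal O_X(mD))$, or equivalently from \cite[Corollary 2.2.7]{lazarsfeld2004positivity}.
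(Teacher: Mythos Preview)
Your proposal is correct and matches the paper's approach: the paper simply presents Lemma~\ref{big} as a variant of Lemma~\ref{big1} without writing out a separate proof, and your first paragraph gives exactly the intended reduction by taking $U = X \setminus \mathrm{supp}(D)$. Your second paragraph's elaboration (re-extracting the auxiliary divisor $E$ and invoking ``big $+$ effective $=$ big'') is unnecessary, since Lemma~\ref{big1} already concludes directly that $D$ is big once $U$ is chosen this way; that argument is internal to the proof of Lemma~\ref{big1} and need not be repeated.
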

We use the notation and terminology as in Section \ref{intro}. 
 Then we have:
 \begin{theorem}\label{bsdhfano}\
  
  \begin{enumerate}
\item  $Z(\tilde w)$ is Fano if and only if it satisfies $I$.
  \item 
    $Z(\tilde w)$ is weak  Fano if and only if it satisfies $II$.
    \end{enumerate}

  \end{theorem}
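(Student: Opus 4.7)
The plan is to translate both statements into intersection-theoretic inequalities against the extremal generators of the Mori cone described in Proposition \ref{perrin}, argue that bigness of $-K_{Z(\tilde w)}$ is automatic, and then carry out a short case analysis on Cartan integers.

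First I would show that $-K_{Z(\tilde w)}$ is always big, independent of $\tilde w$. From the displayed formula opening Section 4, one has the class identity $-K_{Z(\tilde w)} = \partial Z(\tilde w) + \mathcal L(\delta)$. The complement of $\partial Z(\tilde w)$ in $Z(\tilde w)$ is $\phi_w^{-1}(BwB/B) \cong \mathbb A^r$, which is affine, so Lemma \ref{big1} gives that $\partial Z(\tilde w)$ is big. On the other hand $\mathcal L(\delta)$ is the $\phi_w$-pullback of the ample line bundle on $G/B$ attached to the strictly dominant weight $\delta$, hence nef on $Z(\tilde w)$. Since big + nef is big, $-K_{Z(\tilde w)}$ is big. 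Weak Fano therefore reduces to nefness of $-K_{Z(\tilde w)}$, and Kleiman's criterion together with Proposition \ref{perrin} rephrase this as $-K_{Z(\tilde w)}\cdot [\tilde C_i]\geq 0$ for all $i$; similarly Fano becomes $-K_{Z(\tilde w)}\cdot [\tilde C_i] > 0$ for all $i$.

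Next I would compute $-K_{Z(\tilde w)}\cdot [\tilde C_i]$ explicitly via Lemma \ref{5.10}. When $|\eta_i^+|=0$ there is no $s(i)$, so $[\tilde C_i]=[C_i]$ and
$$-K_{Z(\tilde w)}\cdot [\tilde C_i]= 2+\sum_{j\in\eta_i^-}\beta_{ij}.$$
When $|\eta_i^+|\geq 1$ the key observation is that $\beta_{i,s(i)}>0$ forces $\beta_{i,s(i)}=2$ and hence $\beta_{s(i)}=\beta_i$ as simple roots (among simple roots, $\langle \beta_j,\check\beta_i\rangle$ is positive only when $\beta_j=\beta_i$, in which case it equals $2$). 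Consequently $\beta_{s(i),j}=\langle \beta_j,\check\beta_{s(i)}\rangle=\langle\beta_j,\check\beta_i\rangle=\beta_{ij}$ for every $j>s(i)$, so the tails in the two applications of Lemma \ref{5.10} cancel and
$$-K_{Z(\tilde w)}\cdot [\tilde C_i]= \sum_{i<j\leq s(i)}\beta_{ij}=2+\sum_{j\in\eta_{i,s(i)}^-}\beta_{ij}.$$

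Finally a short case analysis matches these formulas with the combinatorial conditions. Recall that the admissible negative Cartan integers are $-1,-2,-3$. Positivity of the right-hand side is equivalent to the sum of the negative entries in the relevant index set ($\eta_i^-$ in the first case, $\eta_{i,s(i)}^-$ in the second) being at least $-1$, hence to either no negative entry or a single entry equal to $-1$; this is exactly condition $N_i^I$. Non-negativity is equivalent to that sum being at least $-2$, which by inspection is achieved precisely when there are no negative entries, or one entry in $\{-1,-2\}$, or two entries both equal to $-1$; this is exactly $N_i^{II}$. Collecting these equivalences over all $i$ yields both parts of the theorem.

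The main obstacle is the simplification in the second case: without the identity $\beta_{s(i)}=\beta_i$ the intersection $-K_{Z(\tilde w)}\cdot [\tilde C_i]$ would involve the entire tail of row $s(i)$ of the matrix of $\beta_{ij}$'s, and no clean local criterion in terms of row $i$ alone could exist. Once that telescoping is in hand, the rest of the argument is bookkeeping over the finite list of possible Cartan integers.
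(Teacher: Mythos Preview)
Your proof is correct and follows essentially the same route as the paper: establish bigness of $-K_{Z(\tilde w)}$ from the existence of an open affine $B$-orbit, reduce (weak) Fano to (strict) non-negativity of $-K_{Z(\tilde w)}\cdot[\tilde C_i]$ via Proposition~\ref{perrin} and Kleiman's criterion, compute these intersections with Lemma~\ref{5.10} using the telescoping identity $\beta_{s(i)}=\beta_i$, and match the resulting inequalities against the finite list of possible Cartan integers. Your bigness argument, decomposing $-K_{Z(\tilde w)}=\partial Z(\tilde w)+\mathcal L(\delta)$ as big plus nef, is a slightly more explicit variant of the paper's direct appeal to Lemma~\ref{big}.
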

  \begin{proof} Proof of (2): 
  Since $\tilde w$ is a reduced expression, 
  $Z(\tilde w)$ has an open $B$-orbit (which is affine), then by Lemma \ref{big}
   we conclude that 
  $-K_{Z(\tilde w)}$ is big. 
    By Proposition \ref{perrin} we have 
      $$\overline{NE}(Z(\tilde w))=\sum_{i=1}^{r}\mathbb R_{\geq 0}[\tilde{C_i}].$$
  Note that the nef cone $Nef(Z(\tilde w))$ is the dual cone of $\overline{NE}(Z(\tilde w))$ (see \cite[Proposition 1.4.28]{lazarsfeld2004positivity}).
   Hence    
   \begin{equation}\label{above}
       -K_{Z(\tilde w)} ~\mbox{is nef if and only if}~ -K_{Z(\tilde w)}\cdot [\tilde{C_{i}}]\geq 0 ~\mbox{for all}~ 1\leq i\leq r.
      \end{equation}

    We prove by using Lemma \ref{5.10} that,     
  \[-K_{Z(\tilde w)}~\mbox{is nef if and only if}~ Z(\tilde w) ~\mbox{satisfies}~ II.  
    \]
 Assume that $-K_{Z(\tilde w)}$ is nef. Then by (\ref{above}) we have 
 $$-K_{Z(\tilde w)}\cdot [\tilde{C_i}]\geq 0 ~\mbox{for all }~1\leq i \leq r.$$
 Fix $1\leq i\leq r$.
 
  \underline{Case 1}: Assume that there exists no such $s(i)$.  Then  $|\eta_i^+|=0$ and  $[\tilde{C_{i}}]=[C_i]$. Hence 
 $$-K_{Z(\tilde w)}\cdot [C_i]\geq 0.$$
 Then, by Lemma \ref{5.10} we get $$2+\sum_{j>i}\beta_{ij}\geq 0.$$
 Since there is no $s(i)$, $\beta_{ij}\leq 0$ for all $j>i.$ Hence 
 we have $$-2\leq \sum_{j>i}\beta_{ij} \leq 0. $$
 
 \underline{Subcase (i):} If $\sum_{j>i}\beta_{ij}=0$, then $\beta_{ij}=0$ for all $j>i$. Hence by definition of $\eta_{i}^-$,
we have  $|\eta_i^-|=0$. Therefore, $Z(\tilde w)$ satisfies condition $N_i^I$.

 \underline{Subcase (ii):}  If $\sum_{j>i}\beta_{ij}=-1$, then  $|\eta_i^-|=1$ and $\beta_{il}=-1$. Hence $Z(\tilde w)$ satisfies condition $N_i^I$.

 \underline{Subcase (iii):}  If $\sum_{j>i}\beta_{ij}=-2$, then $|\eta_i^-|=1$ or $2$. 
 If $|\eta_i^-|=1$, then $\beta_{il}=-2$. If  $|\eta_i^-|=2$, then $\beta_{il_1}=-1=\beta_{il_2}$. Hence
 $Z(\tilde w)$ satisfies condition $N_i^{II}$. 
 
 \underline{Case 2}: If there exists $s(i)$, then $[\tilde{C_i}]=[C_i]-[C_{s(i)}]$.
  By Lemma \ref{5.10}, we get $$-K_{Z(\tilde w)}\cdot [\tilde{C_i}]=\sum_{j>i}\beta_{ij}-\sum_{j>s(i)}\beta_{s(i)j}.$$
    Since $\beta_{ij}=\beta_{s(i)j}$ for $j>s(i)$ and $\beta_{is(i)}=2$, we see that 
   $$-K_{Z(\tilde w)}\cdot [\tilde{C_i}]=2+\sum_{s(i)> j>i}\beta_{ij}.$$
   Since $-K_{Z(\tilde w)}$ is nef, we get $$\sum_{s(i)> j>i}\beta_{ij}\geq -2.$$
By definition of $s(i)$, $\beta_{ij}\leq 0$ for $i<j<s(i)$. 
Since  $\tilde w $ is a reduced expression, we have 
 \begin{equation}\label{11}
  -2\leq \sum_{s(i)>j>i}\beta_{ij} < 0. 
 \end{equation}
 Then $$1\leq |\eta_{i, s(i)}^-|\leq 2.$$
 
\noindent \underline{Subcase (i):} If $|\eta_{i, s(i)}^-|=1$, then by (\ref{11}), $\beta_{im}=-1$ or $-2$.\\
 \underline{Subcase (ii):} If $|\eta_{i, s(i)}^-|=2$, then by (\ref{11}), $\beta_{im_1}=-1=\beta_{im_2}$.

 Hence $Z(\tilde w)$ satisfies $II$. 
 Thus, if $-K_{Z(\tilde w)}$ is nef, then $Z(\tilde w)$
 satisfies $II$.  
   Hence we conclude that if  $Z(\tilde w)$ is weak  Fano then $Z(\tilde w)$ satisfies $II$.
    Similarly,  we can prove that if $Z(\tilde w)$ satisfies $II$,
 then $Z(\tilde w)$ is weak Fano.
 
 Proof of (1): 
  Since the ample cone $Amp(Z(\tilde w))$ is interior of the nef cone $Nef(Z(\tilde w))$, (1) follows from  
  Nakai-Kleiman criterion for ampleness (see \cite[Theorem 1.2.23]{lazarsfeld2004positivity}) and by using the similar arguments as in the proof of $(2)$.   
This completes the proof of the theorem.   
   \end{proof}

   \begin{example} Let $G=SL(5, \mathbb C)$. We use the notation from \cite{Hum1} and \cite{Hum2}.
      \begin{enumerate}
 \item Let $\tilde w=s_{1}s_3$. 
 Then  $|\eta^+_{i}|=|\eta^-_{i}|=0$ for $1\leq i \leq 2$ and $Z(\tilde w)\simeq \mathbb P^1\times \mathbb P^1$. 
 Hence $Z(\tilde w)$ satisfies $I$. 
 \item Let $\tilde w=s_1s_{2}s_{3}$. Then $|\eta^+_{i}|=0$ and $|\eta^-_{i}|=1$ for $i=1, 2$, also we have 
 $\beta_{12}=\beta_{23}=-1$. So, $Z(\tilde w)$ satisfies $I$ and by above theorem, $Z(\tilde w)$ is Fano.
 
 \item Let $\tilde w=s_{1}s_{2}s_{1}$. Then $|\eta^+_{1}|=1, |\eta^+_{2}|=0$, $|\eta^-_{1}|=|\eta^-_{2}|=1$, and $\beta_{12}=\beta_{23}=-1$, and so $Z(\tilde w)$ satisfies $I$. Hence $Z(\tilde w)$ is Fano.

  \item Let $\tilde w=s_2s_3s_1s_2$. Then $|\eta_{1}^+|=1$ , $|\eta_{i}^+|=0$ for $i=2,3$, $|\eta_{i}^-|=1$ for $i=2, 3$, and $s(1)=4$, $|\eta_{1, 4}^-|=2$; $\beta_{12}=\beta_{24}=\beta_{34}=-1$.
So, $Z(\tilde w)$ satisfies $II$ but not $I$. Hence $Z(\tilde w)$ is weak Fano but not Fano. 

 \end{enumerate}
   \end{example}
\begin{example}
 Let $G=SO(7, \mathbb C)$ (i.e. $G$ is of type $B_3$).
 \begin{enumerate}
  \item Let $\tilde w=s_2s_3$. Then $|\eta_1^+|=0$, $|\eta_1^-|=1$ and $\beta_{12}=-1$, so $Z(\tilde w)$ satisfies $I$. Hence $Z(\tilde w)$ is Fano.
  \item Let $\tilde w=s_3s_2$. Then $|\eta_1^+|=0$, $|\eta_1^-|=1$ and $\beta_{12}=-2$, so $Z(\tilde w)$ satisfies $II$ but not $I$. Hence $Z(\tilde w)$ is weak Fano but not Fano.
  \item Let $\tilde w=s_2s_3s_1s_2$. Then $|\eta_{1}^+|=1$, $|\eta_{i}^+|=0$ for $i=2,3$, $|\eta_{i}^-|=1$ for $i=2, 3$, and $s(1)=4$, $|\eta_{1, 4}^-|=2$; $\beta_{12}=\beta_{34}=-1$ and $\beta_{24}=-2$.
So $Z(\tilde w)$ does not satisfy $I$, but it satisfies $II$, by above theorem, it is weak Fano but not Fano.

 \end{enumerate}
\end{example}

\begin{example}
 Let $G$ be the group of type $G_2$ and  let $\tilde w=s_{2}s_1$. 
 Then $|\eta_1^+|=0$, $|\eta_1^-|=1$ and $\beta_{12}=-3$.
 Hence $Z(\tilde w)$ does not satisfy $II$ and by above theorem, it is not weak Fano.
\end{example}

   \section{Local rigidity of BSDH varieties}
   In \cite{Chary1}, \cite{Chary11}, we obtained some vanishing theorems on cohomology of tangent bundle of BSDH variety
(see \cite[Proposition 3.1]{Chary1} and \cite[Theorem 8.1]{Chary11}) and in \cite{Charynonreduced} the case when the expression
$\tilde w$ is non-reduced is considered.
In this section we prove some vanishing results on the
cohomology of tangent bundle of certain BSDH varieties.
 Let $T_{Z(\tilde w)}$ denote the tangent bundle of $Z(\tilde w)$.
   We have,
   \begin{corollary}\label{vanishing}
    If $\tilde w$ is a Coxeter type element and $Z(\tilde w)$ satisfies $I$, then $H^j(Z(\tilde w), T_{Z(\tilde w)})=0$ for all $j>0$.
   \end{corollary}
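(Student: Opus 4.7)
The plan is to assemble two facts already available in the paper with a single vanishing theorem about smooth projective Fano toric varieties. First, because $\tilde w$ is of Coxeter type, the BSDH variety $Z(\tilde w)$ is a smooth projective toric variety, by the theorem of Karuppuchamy cited in the introduction. Second, because $Z(\tilde w)$ satisfies condition $I$, part (1) of Theorem \ref{bsdhfano} gives that $Z(\tilde w)$ is Fano. Putting these together, $Z(\tilde w)$ is a smooth projective Fano toric variety, and the corollary reduces to the statement that for any such variety $X$ one has $H^j(X, T_X)=0$ for all $j>0$.

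For the last step I would invoke the isomorphism $T_X \cong \Omega^{n-1}_X \otimes \omega_X^{-1}$, where $n=\dim X$, together with the Bott--Danilov--Steenbrink vanishing theorem on a smooth complete toric variety, which asserts that $H^j(X, \Omega^{p}_X \otimes L)=0$ for $j>0$ whenever $L$ is ample. Taking $p=n-1$ and $L=\omega_X^{-1}$, which is ample precisely by the Fano hypothesis, yields $H^j(X, T_X)=0$ for $j>0$ directly. Alternatively, one can simply cite the local rigidity theorem for smooth Fano toric varieties from the literature and apply it verbatim to $Z(\tilde w)$.

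I do not anticipate a substantive obstacle in this argument. The two hypotheses of the corollary have been arranged exactly so that $Z(\tilde w)$ lies in the class to which the toric vanishing applies, and Theorem \ref{bsdhfano} does all the work of converting condition $I$ into the Fano condition. The only bookkeeping point worth checking is that the toric structure on $Z(\tilde w)$ furnished by the Coxeter hypothesis is the same Bott-tower/iterated $\mathbb P^1$-bundle structure under which the Mori-cone computation of Section 3 (and hence Theorem \ref{bsdhfano}) was performed; this is immediate from the construction, since both descriptions arise from the chain of projections $f_r, f_{r-1}, \ldots, f_2$ introduced in Section \ref{preleminaries}.
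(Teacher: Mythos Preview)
Your proposal is correct and follows essentially the same approach as the paper: use the Coxeter hypothesis to conclude that $Z(\tilde w)$ is a smooth projective toric variety, invoke Theorem~\ref{bsdhfano}(1) to conclude it is Fano, and then apply a vanishing theorem for the tangent bundle of smooth Fano toric varieties. The only difference is cosmetic: the paper cites \cite[Proposition~4.2]{Bien1996} for the final vanishing, whereas you spell out an equivalent argument via $T_X \cong \Omega^{n-1}_X \otimes \omega_X^{-1}$ and Bott--Danilov--Steenbrink vanishing (and your closing ``bookkeeping'' remark is unnecessary, since Fano is an intrinsic property independent of which toric structure one has in mind).
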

\begin{proof}
 First note that if $\tilde w$ is a Coxeter type element then the Schubert variety $X(w)$ is toric (see \cite{karuppuchamy2013schubert}).
Since $\tilde w$ is a reduced expression, $\phi_{w}: Z(\tilde w)\to X(w)$ is a $B$-equivariant birational morphism, it follows that $Z(\tilde w)$ is also a toric variety. 
 By Theorem \ref{bsdhfano}, if $Z(\tilde w)$ satisfies $I$, then $Z(\tilde w)$ is Fano.
 By \cite[Proposition 4.2]{Bien1996}, we see that
 $H^i(Z(\tilde w), T_{Z(\tilde w)})=0$ for all $j>0$.
\end{proof}
 
 Then we have, 
 
 \begin{corollary}\label{rigid}
  If $\tilde w$ is Coxeter type element and $Z(\tilde w)$ satisfies $I$, then $Z(\tilde w)$ is locally rigid (i.e, it does not have local deformations).
 \end{corollary}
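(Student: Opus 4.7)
The plan is to deduce local rigidity directly from the preceding Corollary \ref{vanishing} together with standard Kodaira--Spencer deformation theory. The hypothesis that $\tilde w$ is of Coxeter type and $Z(\tilde w)$ satisfies condition $I$ is exactly what is needed to apply Corollary \ref{vanishing}, which yields $H^j(Z(\tilde w), T_{Z(\tilde w)}) = 0$ for every $j > 0$. In particular, specializing to $j = 1$ gives $H^1(Z(\tilde w), T_{Z(\tilde w)}) = 0$.

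The next step is to translate the cohomological vanishing into the geometric statement of local rigidity. Recall that for a smooth projective variety $X$ over $\mathbb{C}$, the space $H^1(X, T_X)$ classifies first-order infinitesimal deformations of $X$, and $H^2(X, T_X)$ contains the obstructions to extending them. This is the classical Kodaira--Nirenberg--Spencer theorem. Since in our case both of these spaces vanish (in fact, all higher cohomology of $T_{Z(\tilde w)}$ vanishes), the Kuranishi space of $Z(\tilde w)$ is a single reduced point; equivalently, every first-order deformation is trivial, so $Z(\tilde w)$ admits no nontrivial local deformations. This is precisely the statement that $Z(\tilde w)$ is locally rigid.

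Since Corollary \ref{vanishing} does the heavy lifting, there is essentially no obstacle here: the proof reduces to citing the standard equivalence between the vanishing of $H^1(X, T_X)$ and local rigidity (for instance, as in Kodaira's monograph on complex manifolds, or Sernesi's \emph{Deformations of Algebraic Schemes}). The only mild subtlety is to be explicit about the fact that ``locally rigid'' is being used in the sense of having no infinitesimal deformations, which is the standard convention and matches the parenthetical remark in the statement of the corollary.
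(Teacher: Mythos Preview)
Your argument is correct and is essentially identical to the paper's: both invoke Corollary~\ref{vanishing} to obtain $H^1(Z(\tilde w), T_{Z(\tilde w)})=0$ and then appeal to the standard identification of $H^1$ with infinitesimal deformations (the paper cites \cite[p.~272, Proposition~6.2.10]{huybrechts}). Your additional remarks about $H^2$ and the Kuranishi space are fine but not needed, since vanishing of $H^1$ alone already gives local rigidity in the sense stated.
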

\begin{proof}
 Since the elements of $H^1(Z(\tilde w), T_{Z(\tilde w)})$ bijectively correspond to the local deformations (see \cite[p.272, Proposition 6.2.10]{huybrechts}),
by Corollary \ref{vanishing}, we conclude that $Z(\tilde w)$ has no local deformations.
 \end{proof}

   \begin{remark}
    Note that the proof of \cite[Proposition 4.2]{perrin2005rational} works for the Kac-Moody setting (see \cite{perrin2005rational}), and hence we expect our results can be viewed in that setting also. 
       \end{remark}

{\bf Acknowledgements:}
I would like to thank Michel Brion  for valuable discussions 
and critical comments. I thank S.K. Pattanayak for careful reading of this article and the referees for their valuable suggestions.

\end{document}